\DeclareMathOperator{\Lim}{Lim}
  \DeclareMathOperator{\dom}{dom}
\newtheorem{theorem}{Theorem}
\newtheorem{lemma}[theorem]{Lemma}
\theoremstyle{definition}
\theoremstyle{remark}
\begin{document}

\title{On the descriptive complexity of homogeneous continua}

\author{Pawe{\l}  Krupski}

\email{pawel.krupski@pwr.edu.pl}
\address{Faculty of Pure and Applied Mathematics, Wroc\l aw University of Science and Technology, Wybrze\.{z}e Wyspia\'nskiego 27, 50-370 Wroc\l aw, Poland.}
\dedicatory{Dedicated to Professor Jerzy Mioduszewski on his 90th birthday}
\date{\today}
\subjclass[2020]{Primary 54H05; Secondary 54F16}
\keywords{analytic set, homogeneous continuum, hyperspace}

\begin{abstract}
It is shown that the family of all  homogeneous continua in the hyperspace of all subcontinua of any finite-dimensional Euclidean cube or the Hilbert cube is an analytic subspace of the hyperspace which contains a topological copy of the linear space $c_0=\{(x_k)\in \mathbb R^\omega: \lim x_k=0\}$ as a closed subset.
\end{abstract}

\maketitle
A continuum $X$ is homogeneous if
\begin{multline}\label{h}
\text{for every two points  $x,y\in X$  there is a homeomorphism}\\
\text{$h:X\to X$  such that  $h(x)=y$.}\tag{h}
\end{multline}

Let $I=[0,1]$ and $2\le n\le \omega$. Given a Polish space $(X,d)$, let $2^X$ be the hyperspace of all nonempty compact subsets of $X$ with the Hausdorff metric $H_d$ and  $C(X)\subset 2^X$ be its closed  subspace of all subcontinua in $X$. It is known that $2^X$ is a Polish space and if $X$ is compact then $2^X$ is compact.  We use standard notation $\Lim A_k=A$ if $H_d(A_k,A) \to 0$ in the metric space $(2^X,H_d)$.   Denote by   $\mathcal H(I^n)$ the subspace of $C(I^n)$ consisting of all homogeneous continua in $I^n$.

In 2003, D. P. Bellamy raised the  problem of determining the descriptive complexity of $\mathcal H(I^n)$.  For $n=2$, it is now slightly better understood due to L. C. Hoehn and L. G. Oversteegen~\cite{HO} who settled the long-standing conjecture  that  there are four topological types of  homogeneous plane continua: a point, a circle, the pseudo-arc and the circle of pseudo-arcs. It is also known that  the exact Borel class of the family of all simple closed curves in $I^n$ is  $F_{\sigma\delta}$ in $C(I^n)$~\cite{K1}, the family of all pseudo-arcs in $I^n$ is dense $G_\delta$ in $C(I^n)$~\cite{B} and the family of all circles of pseudo-arcs in $I^n$ is a Borel subset of $C(I^n)$~\cite{CRN} (unfortunately, the actual Borel class  of the latter family  is unknown). So, we conclude  that $\mathcal H(I^2)$ is Borel in $C(I^2)$.

In higher dimensions, there are much more types of homogeneous continua. The Borel complexity of some of them, the most popular ones,  was studied in~\cite{K2,K3} where  some special characterizations of the families were employed. For example, the family of Menger universal curves is exactly $F_{\sigma\delta}$ in $C(I^n)$ for $n\ge 3$,  the family of all solenoids  is a Borel subset of $C(I^n)$ for $n\ge 3$ but we do not know its exact Borel class, similarly for the family of solenoids of  pseudo-arcs or Menger curves of  pseudo-arcs.

These particular results  may suggest that $\mathcal H(I^n)$ is a Borel subset of $C(I^n)$. A naive direct analysis of  formula~\eqref{h} gives the projective class $\Pi^1_2$ for  $\mathcal H(I^n)$.   In this note we reduce the complexity  of $\mathcal H(I^n)$ by showing that it is a $\Sigma^1_1$-subset of $C(I^n)$ which is not $G_{\delta\sigma}$.

\

Recall from~\cite{Ku} that, given compact metric spaces $X,Y$,  the set $P(X,Y)\subset 2^{X\times Y}$ of partial continuous maps $f:Z\to Y$, $Z\in 2^X$ (the maps are identified with their graphs), topologized by the convergence
\begin{multline}\label{partial}
\text{$ f_k \to f$ if and only if $\Lim (\dom f_k)=\dom f$ and}\\
\text{$\lim f_k(x_k)= f(x)$ whenever $x_k\in\dom f_k$, $x\in\dom f$ and $\lim x_k=x$},\tag{*}
\end{multline}
 ($\dom g$ denotes the domain of a partial map $g\in P(X,Y)$), is a Polish space and the subspace $H_p(X,Y)\subset P(X,Y)$ consisting of all partial homeomorphisms is a $G_\delta$ subset of  $P(X,Y)$, so it is also a Polish space.

Let     $Aut_p(I^n)$ denote  the subspace of $H_p(I^n,I^n)$ of all partial autohomeomorphisms $h:X\to X$ where  $X\in C(I^n)$. Given any  $X\in 2^{I^n}$, if $H(X)$ is the space of all autohomeomorphisms $h:X\to X$ with the uniform convergence, then  $H(X)$ is a closed subspace of $Aut_p(I^n)$

\

By a straightforward verification, we get the following

\begin{lemma}\label{l1}
$Aut_p(I^n)$ is a closed subset of $H_p(I^n,I^n)$, so it is a Polish space.
\end{lemma}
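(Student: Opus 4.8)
The goal is to show that $Aut_p(I^n)$ is closed in $H_p(I^n,I^n)$. Since $H_p(I^n,I^n)$ is a $G_\delta$ (hence Polish) subspace of $P(I^n,I^n)$, a closed subset of it will again be Polish, giving the second assertion for free. So the plan is to take a convergent sequence $h_k\to h$ in $H_p(I^n,I^n)$ with each $h_k\in Aut_p(I^n)$ and verify that the limit partial homeomorphism $h$ again belongs to $Aut_p(I^n)$; that is, $\dom h$ is a subcontinuum of $I^n$ and $h$ maps $\dom h$ onto itself.

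First I would record the two defining features of membership in $Aut_p(I^n)$: a partial homeomorphism $g\in H_p(I^n,I^n)$ lies in $Aut_p(I^n)$ exactly when (i) $\dom g\in C(I^n)$, i.e. the domain is connected, and (ii) $g(\dom g)=\dom g$, i.e. the range coincides with the domain. For the convergence \eqref{partial}, I would use that $\Lim(\dom h_k)=\dom h$ in $2^{I^n}$. Since each $\dom h_k$ is a continuum and $C(I^n)$ is closed in $2^{I^n}$, the Hausdorff limit $\dom h$ is automatically a continuum; this disposes of condition (i) immediately. Thus the entire content of the lemma reduces to verifying condition (ii) for the limit, namely that $h(\dom h)=\dom h$.

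The main work is checking that $h$ carries $\dom h$ onto itself, and here the defining convergence \eqref{partial} does the job almost mechanically. To see $h(\dom h)\subset\dom h$, take $x\in\dom h$; by $\Lim(\dom h_k)=\dom h$ choose $x_k\in\dom h_k$ with $x_k\to x$, and then \eqref{partial} gives $h_k(x_k)\to h(x)$. Since $h_k\in Aut_p(I^n)$ we have $h_k(x_k)\in\dom h_k$, and as $\dom h_k\to\dom h$ in the Hausdorff metric the limit point $h(x)$ must lie in $\dom h$ (this is the closedness half, $\Ls(\dom h_k)\subset\dom h$, of Hausdorff convergence). For the reverse inclusion $\dom h\subset h(\dom h)$, I would apply the same argument to the inverses $h_k^{-1}\to h^{-1}$: since passing to inverses is continuous on $H_p(I^n,I^n)$ and each $h_k^{-1}$ is again a partial autohomeomorphism of the same domain $\dom h_k$, every $y\in\dom h$ is a limit of points $h_k^{-1}(y_k)\in\dom h_k$ with $y_k\to y$, forcing $h^{-1}(y)\in\dom h$, i.e. $y\in h(\dom h)$.

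The only point requiring a little care—the expected main obstacle, though a mild one—is the bookkeeping of \emph{which} sequences one may extract from $\dom h_k$ and the verification that the inverse map $g\mapsto g^{-1}$ is continuous on $H_p(I^n,I^n)$ so that $h^{-1}$ is genuinely the limit of $h_k^{-1}$; both follow directly from unwinding definition \eqref{partial}, which is why the author calls this a straightforward verification. Once condition (ii) is established, $h\in Aut_p(I^n)$, so $Aut_p(I^n)$ is closed in the Polish space $H_p(I^n,I^n)$ and is therefore itself Polish, completing the proof.
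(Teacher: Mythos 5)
Your argument is correct and is precisely the ``straightforward verification'' the paper leaves to the reader: the limit domain is a continuum because $C(I^n)$ is closed in $2^{I^n}$, and both inclusions of $h(\dom h)=\dom h$ follow from the convergence \eqref{partial} together with compactness. The only cosmetic point is in the reverse inclusion, where you write $h^{-1}(y)$ before knowing that $y\in h(\dom h)$; this is avoided (and the appeal to continuity of inversion on $H_p(I^n,I^n)$ made unnecessary) by choosing $x_k\in\dom h_k$ with $h_k(x_k)=y_k\to y$, extracting by compactness of $I^n$ a subsequence $x_{k_i}\to x\in\dom h$, and concluding $h(x)=\lim_i h_{k_i}(x_{k_i})=y$.
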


Part (1) of the next lemma follows from the Effros' theorem~\cite{Ef};  part (2) is a consequence of part (1) and the fact that open maps on completely metrizable spaces are compact-covering~\cite[Problem 5.5.11]{E}.
\begin{lemma}\label{l2} If $X$ is a homogeneous compactum, then
\begin{enumerate}
\item
for each $a\in X$, the valuation map $\phi_a:  H(X)\to X$, $\phi_a(h)=h(a)$, is an open surjection.
\item
For every closed  $K\subset X$, there exists a compact $C\subset H(X)$ such that $\phi_a(C)= K$.
\end{enumerate}
\end{lemma}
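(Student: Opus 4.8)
The plan is to prove Lemma~\ref{l2} in two parts, using homogeneity and completeness of the hyperspaces established earlier.

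\medskip

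For part~(1), I would invoke the Effros theorem directly. Fix $a\in X$. Since $X$ is a homogeneous compactum, property~\eqref{h} guarantees that $\phi_a$ is surjective: for any $y\in X$ there is $h\in H(X)$ with $h(a)=y$. The real content is openness. The Effros theorem says that if a Polish group $G$ acts transitively on a Polish space $X$ by homeomorphisms, then the action is micro-transitive, equivalently each orbit map $g\mapsto g\cdot x$ is open. Here I would take $G=H(X)$ with the topology of uniform convergence; since $X$ is a homogeneous compactum, $H(X)$ is a Polish group acting transitively on the Polish space $X$, and $\phi_a$ is precisely the orbit map at $a$. Hence Effros' theorem yields that $\phi_a$ is open. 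The one point requiring care is confirming that $H(X)$ is genuinely a Polish group in the uniform topology and that the evaluation action is continuous, but this is standard for autohomeomorphism groups of compacta and is implicitly available from the fact that $H(X)$ is a closed subspace of the Polish space $Aut_p(I^n)$.

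\medskip

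For part~(2), I would combine the openness from~(1) with the compact-covering property of open maps. By~(1) the map $\phi_a\colon H(X)\to X$ is an open continuous surjection between completely metrizable spaces. By the cited result~\cite[Problem 5.5.11]{E}, every open continuous surjection of a completely metrizable space onto a metrizable space is \emph{compact-covering}: for each compact $K$ in the range there is a compact $C$ in the domain with $\phi_a(C)=K$. Applying this with the given closed (hence compact, as $X$ is compact) set $K\subset X$ produces the desired compact $C\subset H(X)$ satisfying $\phi_a(C)=K$.

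\medskip

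I expect the main obstacle to be purely a matter of bookkeeping rather than deep argument, namely verifying precisely the hypotheses under which the Effros theorem and the compact-covering theorem apply: that $H(X)$ is Polish, that the action is continuous and transitive, and that $K$ is compact so that the compact-covering conclusion is applicable. Once these are in place, both parts follow formally from the two cited results with essentially no further computation.
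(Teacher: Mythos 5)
Your proposal is correct and follows exactly the paper's own argument: the paper likewise proves part~(1) by applying Effros' theorem to the orbit map of the transitive action of the Polish group $H(X)$ on $X$, and part~(2) by combining the openness from part~(1) with the fact that open maps on completely metrizable spaces are compact-covering~\cite[Problem 5.5.11]{E}. Your added bookkeeping (Polishness of $H(X)$, continuity of the action, compactness of $K$) is sound and only makes explicit what the paper leaves implicit.
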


Let $\mathcal F= \{\,(X,a)\in C(I^n)\times I^n: a\in X,\, X\in \mathcal H(I^n)\,\}$.
In view of Lemma~\ref{l2}, the following  equation is obvious but it serves as a key observation  in our evaluation of the complexity of $\mathcal H(I^n)$.
\begin{equation}\label{F}
\mathcal F=\{\,(X,a)\in C(I^n)\times I^n: a\in X,\ \exists\, C\in 2^{Aut_p(I^n)}\ C \subset H(X),\, \phi_a(C)=X\,\}.\tag{$\mathcal F$}
\end{equation}

\begin{lemma}\label{l3}
The set $\{(C,X)\in 2^{Aut_p(I^n)}\times C(I^n): C \subset H(X)\}$ is closed in $2^{Aut_p(I^n)}\times C(I^n)$.
\end{lemma}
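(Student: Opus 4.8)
The plan is to establish closedness sequentially, using the fact that membership in $H(X)$ is controlled entirely by the domain of the partial map. First I would note that, since every element of $Aut_p(I^n)$ is by definition a homeomorphism of its own domain onto itself, a partial map $h\in Aut_p(I^n)$ lies in $H(X)$ precisely when $\dom h=X$. Hence the set under consideration equals
\[
\{(C,X)\in 2^{Aut_p(I^n)}\times C(I^n):\ \dom h=X\ \text{for every }h\in C\},
\]
and, all the spaces being metrizable, it suffices to prove that this set is sequentially closed.

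So I would take a convergent sequence $(C_k,X_k)\to(C,X)$ in $2^{Aut_p(I^n)}\times C(I^n)$ with $C_k\subseteq H(X_k)$, i.e.\ $\dom h=X_k$ for every $h\in C_k$, and fix an arbitrary $g\in C$. Because $C_k\to C$ in the Hausdorff metric of $2^{Aut_p(I^n)}$, we have $\operatorname{dist}(g,C_k)\to 0$, so I can select points $h_k\in C_k$ with $h_k\to g$ in $Aut_p(I^n)$.

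Now I apply the convergence~\eqref{partial}. From $h_k\to g$ in $P(I^n,I^n)$ its first clause gives $\Lim(\dom h_k)=\dom g$. Since $h_k\in C_k\subseteq H(X_k)$, we have $\dom h_k=X_k$ for each $k$, so the sequence $(\dom h_k)$ is exactly $(X_k)$, whose limit in $C(I^n)$ is $X$. Therefore $\dom g=\Lim X_k=X$, which means $g\in H(X)$; as $g\in C$ was arbitrary, $C\subseteq H(X)$, as desired.

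The argument is short, and I expect no serious obstacle. The only points requiring care are the selection step---namely that a point of the Hausdorff limit $C$ is genuinely approximated by points $h_k\in C_k$ in the topology of $Aut_p(I^n)$---and the observation that the domain clause of~\eqref{partial} alone drives the whole proof: the value-convergence clause is never invoked, because the failure of $C\subseteq H(X)$ could only be caused by some $g\in C$ whose domain differs from $X$.
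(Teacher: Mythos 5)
Your proof is correct and essentially identical to the paper's: the paper likewise picks $h_k\in C_k$ with $h_k\to h$ for a given $h\in C$, uses the domain clause of~\eqref{partial} to get $\dom h=\Lim(\dom h_k)=\Lim X_k=X$, and concludes $h\in H(X)$ because $h$ already lies in $Aut_p(I^n)$. Your explicit remarks---that only the domain clause of~\eqref{partial} is needed and that membership in $H(X)$ reduces to $\dom h=X$ for elements of $Aut_p(I^n)$---are accurate glosses on the same argument, not a different one.
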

\begin{proof}
Let $C_k\subset H(X_k)$,   $C_k\to C$ in $2^{Aut_p(I^n)}$ and $X_k\to X$ in $C(I^n)$. In order to show that $C\subset H(X)$,  take $h\in C$. There exist $h_k\in C_k$ such that $h_k\to h$  in $Aut_p(I^n)$ which means, by~\eqref{partial},  that $\dom h = \Lim (\dom h_k) = \Lim X_k= X$. Since $h$ is assumed to be in $Aut_p(I^n)$, we get  $h\in H(X)$.
 \end{proof}
\begin{lemma}\label{l4}
The set $$\{(C,X,a)\in 2^{Aut_p(I^n)}\times C(I^n)\times I^n: a\in X, \ C\subset H(X), \  \phi_a(C)=X\}$$ is closed in $2^{Aut_p(I^n)}\times C(I^n)\times I^n$.
\end{lemma}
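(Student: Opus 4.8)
The plan is to prove closedness by a direct sequential argument, exploiting that the three defining conditions behave quite differently. So I would fix a sequence $(C_k,X_k,a_k)$ in the set converging to some $(C,X,a)$ in $2^{Aut_p(I^n)}\times C(I^n)\times I^n$ and show that the limit again satisfies $a\in X$, $C\subset H(X)$ and $\phi_a(C)=X$. The first two conditions require essentially no new work: the set $\{(X,a):a\in X\}$ is closed in $C(I^n)\times I^n$, so $a_k\in X_k$, $a_k\to a$ and $X_k\to X$ give $a\in X$; and $C\subset H(X)$ is exactly the content of Lemma~\ref{l3} applied to $C_k\subset H(X_k)$, $C_k\to C$ and $X_k\to X$. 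Everything thus reduces to the surjectivity condition $\phi_a(C)=X$.

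For the equality $\phi_a(C)=X$ I would argue by two inclusions. The inclusion $\phi_a(C)\subset X$ is immediate from the work already done: every $h\in C$ lies in $H(X)$ and $a\in X=\dom h$, whence $h(a)\in X$. The reverse inclusion $X\subset\phi_a(C)$ is the heart of the matter. Given $x\in X$, lower semicontinuity of the Hausdorff convergence $X=\Lim X_k$ yields points $x_k\in X_k$ with $x_k\to x$. Since $\phi_{a_k}(C_k)=X_k$, each $x_k$ can be written as $x_k=h_k(a_k)$ for some $h_k\in C_k$, where $a_k\in X_k=\dom h_k$ because $h_k\in C_k\subset H(X_k)$.

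The main obstacle is to produce a single $h\in C$ with $h(a)=x$ from the sequence $(h_k)$, and this is where compactness of $C$ and the topology~\eqref{partial} combine. Because $C_k\to C$ in the Hausdorff metric and $C$ is compact, the distances from $h_k$ to the compact set $C$ tend to zero, so a subsequence $h_{k_j}$ converges to some $h\in C$. Then $a_{k_j}\to a$ with $a_{k_j}\in\dom h_{k_j}$ and $a\in X=\dom h$, so~\eqref{partial} gives $h_{k_j}(a_{k_j})\to h(a)$, i.e. $x_{k_j}\to h(a)$; comparing with $x_{k_j}\to x$ forces $x=h(a)\in\phi_a(C)$. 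I expect the only delicate point to be verifying the domain hypotheses needed to invoke~\eqref{partial} (that $a$ and each $a_{k_j}$ genuinely lie in the relevant domains), which is guaranteed by $C\subset H(X)$ and $C_k\subset H(X_k)$ together with $a\in X$ and $a_k\in X_k$. This settles all three conditions and hence the asserted closedness.
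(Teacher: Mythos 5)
Your proposal is correct and follows essentially the same route as the paper: both reduce everything to the inclusion $X\subset\phi_a(C)$, approximate $x\in X$ by points $x_k=h_k(a_k)\in X_k$ with $h_k\in C_k$, extract a convergent subsequence of $(h_k)$ whose limit $h$ lies in $C$, and invoke the convergence~\eqref{partial} to get $h(a)=x$. The only cosmetic difference is the extraction step: the paper uses compactness of $C\cup\bigcup_k C_k$ and then identifies the limit as a member of $C$, while you get $h\in C$ directly from $d(h_k,C)\to 0$ together with compactness of $C$, and deduce $a\in\dom h$ from $h\in C\subset H(X)$ rather than computing $\dom h=\Lim(\dom h_{k_i})$ --- an equally valid, slightly streamlined variant of the same argument.
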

 \begin{proof}
 Suppose $a_k\in X_k$, $C_k\subset H(X_k)$, $\phi_a(C_k)=X_k$ and $a_k\to a$, $X_k\to X$,   $C_k\to C$  in respective spaces. Then $a\in X$ and $C\subset H(X)$ by Lemma~\ref{l3} which implies the inclusion $\phi_a(C)\subset X$.

  In order to  show the converse inclusion, let $x\in X$. There exist a sequence $x_k\in X_k$ converging to $x$ and a sequence $h_k\in C_k$ such that $h_k(a_k)=x_k$. The set $C\cup \bigcup_{k\in \omega}\, C_k$ being compact,  there is a convergent subsequence $(h_{k_i})$. Denote by $h$ its limit in $Aut_p(I^n)$. Then
 $$\dom h=\Lim_i (\dom h_{k_i})= \Lim_i X_{k_i}= X$$    and  $$\lim_i  x_{k_i}=\lim_i  h_{k_i}(a_{k_i})=h(a).$$  It follows that $h\in C$ and $\phi_a(h)=x$.

  \end{proof}

 \begin{lemma}\label{l5}
 The set $\mathcal F$ is $\Sigma^1_1$ in $C(I^n)\times I^n$.
  \end{lemma}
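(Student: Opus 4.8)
The plan is to recognize $\mathcal F$ as a projection of the closed set supplied by Lemma~\ref{l4} and then invoke the projection theorem for analytic sets. Put $Z=2^{Aut_p(I^n)}\times C(I^n)\times I^n$ and let $D\subset Z$ be the set of Lemma~\ref{l4}, namely $D=\{(C,X,a): a\in X,\ C\subset H(X),\ \phi_a(C)=X\}$. By the reformulation~\eqref{F}, a pair $(X,a)$ lies in $\mathcal F$ exactly when there is some $C\in 2^{Aut_p(I^n)}$ with $(C,X,a)\in D$; equivalently, $\mathcal F=\pi(D)$, where $\pi\colon Z\to C(I^n)\times I^n$ is the coordinate projection that forgets the first factor.

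First I would record that $Z$ is Polish. Indeed $Aut_p(I^n)$ is Polish by Lemma~\ref{l1}, and, as noted at the outset, the hyperspace of nonempty compact subsets of a Polish space is again Polish, so $2^{Aut_p(I^n)}$ is Polish; since $C(I^n)$ and $I^n$ are (compact) Polish, $Z$ is a finite product of Polish spaces and hence Polish. Next, Lemma~\ref{l4} asserts that $D$ is closed in $Z$, so $D$ is Borel and in particular $\Sigma^1_1$.

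It then remains to push $D$ forward through $\pi$. The map $\pi$ is continuous, and the continuous image of an analytic subset of a Polish space inside another Polish space is analytic; equivalently, a $\Sigma^1_1$ subset of a product of Polish spaces stays $\Sigma^1_1$ after projecting along a Polish factor. Applying this to $D$ and $\pi$ shows that $\mathcal F=\pi(D)$ is $\Sigma^1_1$ in $C(I^n)\times I^n$, which is exactly the assertion of the lemma.

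As for the main obstacle: there is essentially none remaining, since the substantive work was done in Lemmas~\ref{l1}--\ref{l4} and in establishing~\eqref{F}. The only points demanding care are bookkeeping ones, namely confirming that the Hausdorff-metric hyperspace of the Polish space $Aut_p(I^n)$ is itself Polish, and being precise that the projection theorem applies because we are projecting along the Polish parameter space $2^{Aut_p(I^n)}$ rather than quantifying over an arbitrary index set. The genuinely delicate step—replacing the $\Pi^1_2$ reading of~\eqref{h} by a single existential quantifier ranging over a fixed Polish space—has already been carried out in passing to~\eqref{F}, so this lemma merely harvests that reformulation.
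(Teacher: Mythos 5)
Your proposal is correct and follows exactly the paper's own argument: $\mathcal F$ is the projection along the Polish factor $2^{Aut_p(I^n)}$ of the closed set from Lemma~\ref{l4}, hence analytic. The extra bookkeeping you supply (that $2^{Aut_p(I^n)}$ is Polish via Lemma~\ref{l1}, and that projections of closed subsets of Polish products are $\Sigma^1_1$) is exactly what the paper leaves implicit.
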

 \begin{proof}
  Refer to equation~\eqref{F} and observe that $\mathcal F$ is the projection of the closed subset $\{(C,X,a): a\in X, \ C\subset H(X), \  \phi_a(C)=X\}$ of the Polish space $2^{Aut_p(I^n)}\times C(I^n)\times I^n$ into the $C(I^n)\times I^n$.
  \end{proof}

  \begin{theorem}
 The space $\mathcal H(I^n)$ is a $\Sigma^1_1$-subset of the hyperspace $C(I^n)$ and $\mathcal H(I^n)$ contains a closed homeomorphic copy of $c_0=\{(x_k)\in \mathbb R^\omega: \lim x_k=0\}$. Consequently, $\mathcal H(I^n)$ is not $G_{\delta\sigma}$.
  \end{theorem}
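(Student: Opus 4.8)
The first assertion is immediate from the work already done. Since the members of $C(I^n)$ are nonempty continua, a continuum $X$ is homogeneous exactly when $(X,a)\in\mathcal F$ for some (equivalently every) $a\in X$; hence $\mathcal H(I^n)$ is precisely the projection of $\mathcal F$ onto the first coordinate $C(I^n)$. By Lemma~\ref{l5} the set $\mathcal F$ is $\Sigma^1_1$ in $C(I^n)\times I^n$, and since analytic sets are preserved under projections (here along the compact factor $I^n$), $\mathcal H(I^n)$ is $\Sigma^1_1$ in $C(I^n)$.

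For the closed copy of $c_0$, the plan is to produce a family of \emph{simple closed curves}—each homeomorphic to $S^1$, hence homogeneous in every $I^n$ with $n\ge 2$—parametrized continuously and injectively by $c_0$, with image closed in $\mathcal H(I^n)$. Working inside a $2$-cell $I^2\subset I^n$, I would begin from the boundary circle of the square and, for a parameter $w$ in the standard copy $\{w\in[0,1]^\omega:\lim w_k=0\}$ of $c_0$, attach to the bottom edge at abscissa $1/k$ a vertical ``tooth'' of height $w_k$; call the resulting continuum $Y(w)$, and keep the left edge $\{0\}\times[0,1]$ present as a permanent ``limit bar.'' The decisive point is \emph{continuity in the product topology}: changing a coordinate $w_k$ moves only the $k$-th tooth, which lies within horizontal distance $1/k$ of the full limit bar, so its Hausdorff effect is at most $1/k$ uniformly in the value of $w_k$; combined with the fact that the product topology controls any fixed finite block of coordinates, this makes $w\mapsto Y(w)$ continuous on all of $[0,1]^\omega$. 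When $w\in c_0$ the teeth shrink toward the bar and $Y(w)$ is a locally connected embedded circle, hence homogeneous; when $\lim w_k\ne 0$, infinitely many teeth of height bounded away from $0$ accumulate on the bar, so $Y(w)$ fails to be locally connected at the corresponding bar points while remaining locally connected elsewhere, e.g. on the right edge.

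Injectivity and continuity of the inverse are then routine, since for each fixed $k$ the height of the $k$-th tooth is a Hausdorff-continuous function of the curve, so Hausdorff convergence of the $Y(w)$'s forces coordinatewise convergence of the parameters; this gives a homeomorphism of $c_0$ onto $\Phi(c_0):=\{Y(w):w\in c_0\}$. The heart of the matter, and where I expect the main difficulty, is \emph{closedness of $\Phi(c_0)$ in $\mathcal H(I^n)$}. Given $Y(w^{(j)})\to L$ in $C(I^n)$ with $L$ homogeneous, compactness of $[0,1]^\omega$ lets me pass to a subsequence along which $w^{(j)}\to w$ coordinatewise, and continuity of $w\mapsto Y(w)$ yields $L=Y(w)$. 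If $\lim w_k=0$ then $L\in\Phi(c_0)$. If not, the construction makes $Y(w)$ locally connected at some points but not at others; since a homogeneous continuum must be locally connected either everywhere or nowhere, $Y(w)$ is \emph{not} homogeneous, contradicting $L\in\mathcal H(I^n)$. Thus every homogeneous limit of the family already belongs to the family. The bookkeeping that makes the parameter space an exact topological copy of $c_0$ (the identification of $\{w\in[0,1]^\omega:\lim w_k=0\}$ with $c_0$ and the verification that the attached teeth yield a genuinely embedded circle) is routine and I would isolate it in auxiliary lemmas.

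Finally, the failure of any $G_{\delta\sigma}$ representation follows formally. It is classical that $c_0$ is a true $F_{\sigma\delta}$ set, that is, $\Pi^0_3$-complete and therefore not $\Sigma^0_3$ in its natural embedding; by the absoluteness of Borel classes (Lavrentiev's theorem), $c_0$ fails to be $\Sigma^0_3$ in \emph{any} Polish space in which it embeds as a subspace. Were $\mathcal H(I^n)$ a $G_{\delta\sigma}$ (equivalently $\Sigma^0_3$) subset of $C(I^n)$, then the closed copy $\Phi(c_0)=\mathcal H(I^n)\cap F$, with $F$ closed in $C(I^n)$, would be the intersection of a $\Sigma^0_3$ set with a closed set and hence $\Sigma^0_3$ in the Polish space $C(I^n)$, contradicting that $c_0$ is not absolutely $\Sigma^0_3$. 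Consequently $\mathcal H(I^n)$ is not $G_{\delta\sigma}$.
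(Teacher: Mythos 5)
Your first and last steps are correct and coincide with the paper's: $\mathcal H(I^n)$ is the projection of $\mathcal F$ along the compact factor $I^n$, hence $\Sigma^1_1$ by Lemma~\ref{l5}, and the final absoluteness argument (a closed subset of a $G_{\delta\sigma}$ set in a Polish space would be an absolute $G_{\delta\sigma}$, while $c_0$ is a true absolute $F_{\sigma\delta}$) is exactly the paper's. But the central construction, as you describe it, fails. You keep the \emph{entire} boundary circle of the square, including the bottom edge, and attach the vertical tooth $\{1/k\}\times[0,w_k]$ to it. Then for every $w$ with some $w_k>0$ --- in particular for all $w\in c_0$ other than the zero sequence --- the point $(1/k,0)$ is a branch point of order $3$ in $Y(w)$ (bottom edge to the left, bottom edge to the right, tooth upward), whereas interior points of the top edge have order $2$. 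Order of a point is a topological invariant, so $Y(w)$ is not homeomorphic to $S^1$ and is not homogeneous; it is a locally connected continuum with points of different orders, so the dichotomy ``locally connected everywhere or nowhere'' does not rescue it either. Hence your map $\Phi$ sends only the single parameter $w=0$ into $\mathcal H(I^n)$, and the claimed closed copy of $c_0$ inside $\mathcal H(I^n)$ collapses to a point. The repair is precisely what the paper does: delete the bottom edge and make the teeth part of the curve itself, routing it through a zigzag --- segments $A_k$ from $(1/k,0)$ to $(1/(k+1),x_k/2)$ and $B_k$ from $(1/(k+1),x_k/2)$ to $(1/(k+1),0)$ --- so that $f(x)$ is a genuine simple closed curve exactly when $x_k\to 0$, and otherwise fails local connectedness at some points of the left bar while remaining locally connected elsewhere.

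A secondary gap: you call $\hat{c_0}:=\{w\in[0,1]^\omega:\lim w_k=0\}$ ``the standard copy'' of $c_0=\{(x_k)\in\mathbb R^\omega:\lim x_k=0\}$, but these live in different ambient products and the homeomorphism between them is not a formality; the paper invokes the Dijkstra--van Mill--Mogilski characterization of both spaces as $F_{\sigma\delta}$-absorbers to identify them, and you should cite something equivalent. With those two repairs, the rest of your argument --- continuity of the parametrization via the uniform $1/k$-proximity of the $k$-th tooth to the permanent left bar, coordinatewise recovery of the parameter for injectivity of the inverse, the compactness-and-subsequence argument showing $f(\hat{c_0})=f(I^\omega)\cap\mathcal H(I^n)$ is closed in $\mathcal H(I^n)$, and the local-connectedness dichotomy for homogeneous continua --- is sound and matches the paper's proof step for step.
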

 \begin{proof}
 $\mathcal H(I^n)$ is the projection of $\mathcal F$ into  $C(I^n)$, therefore it is  $\Sigma^1_1$, by Lemma~\ref{l5}. The second part of the theorem follows directly from
 the proof of an analogous fact in~\cite[Theorem 2 (2)]{K1} (see also~\cite{K1e}). For  reader's convenience, we recall  a  (slightly modified) simple construction of an embedding $f: I^\omega \to C(I^2)$ such that $f(x)$ is a simple closed curve if $x=(x_k)\in \hat{c_0}:=\{(x_k)\in I^\omega: \lim x_k=0\}$ and $f(x)$ is not homogeneous otherwise.
 Let $A_k$ be the segment in $I^2$ from the point $(1/k,0)$ to $(1/(k+1),x_k/2)$ and $B_k$ be the segment from   $(1/(k+1),x_k/2)$ to $(1/(k+1),0)$. Define
 $$f(x)=\bigl(\{0\}\times I\bigr)\cup \bigl( I\times \{1\}\bigr)\cup\bigl(\{1\}\times I\bigr)\cup\bigcup_k(A_k\cup B_k).$$
 Now, $f(\hat{c_0})=f(I^\omega)\cap \mathcal H(I^2)$, hence $f(\hat{c_0})$ is a closed subset of $\mathcal H(I^2)$.   Clearly, we can regard $f$ as an embedding in $C(I^n)$ for $n\ge 2$, so that $f(\hat{c_0})=f(I^\omega)\cap \mathcal H(I^n)$. It is also known that  $\hat{c_0}$ is homeomorphic to $c_0$ because the spaces are both characterized as $F_{\sigma\delta}$-absorbers (see~\cite{DMM}). Thus, $c_0$ embeds in $\mathcal H(I^n)$ as a closed subset . It follows that the space $\mathcal H(I^n)$ is not $G_{\delta\sigma}$ in $C(I^n)$, because otherwise its closed subset $f(\hat{c_0})$ would be $G_{\delta\sigma}$ in $C(I^n)$, so an absolute $G_{\delta\sigma}$-set which is not possible since $c_0$ is a classical example of an absolute $F_{\sigma\delta}$-set which is not absolute $G_{\delta\sigma}$.

 \end{proof}

\subsection*{Acknowledgments}
The author thanks the referee for careful reading  and  corrections.

\bibliographystyle{amsplain}

\end{document}